\def\Bbb{\mathbb}
\title{\bf On the Asymptotic Behavior of Dedekind Sums}
\author{Kurt Girstmair\\
Institut f\"ur Mathematik \\
Universit\"at Innsbruck   \\
Technikerstr. 13/7        \\
A-6020 Innsbruck, Austria \\
Kurt.Girstmair@uibk.ac.at}
\date{}
\let\@@maketitle=\maketitle
\def\maketitle{\def\thispagestyle##1{\relax}\@@maketitle}
\newtheorem{theorem}{Theorem}
\newenvironment{rem}{{\em Remark.}}{}
\newenvironment{exmp}{{\em Example.}}{}
\def\BE{\begin{equation}}
\def\EE{\end{equation}}
\def\BD{\begin{displaymath}}
\def\ED{\end{displaymath}}
\def\BA{\begin{array}}
\def\EA{\end{array}}
\def\BEA{\begin{eqnarray*}}
\def\EEA{\end{eqnarray*}}
\def\BI{\bibitem}
\def\Z{\Bbb Z}
\def\R{\Bbb R}
\def\phi{\varphi}
\def\MB{\mbox}
\def\LD{\ldots}
\def\OV{\overline}
\def\DED{Dedekind }
\def\FL{$\lfloor\enspace\rfloor$}
\def\CE{$\lceil\enspace\rceil$}
\begin{document}
\maketitle

\begin{abstract}
\noindent
Let $z$ be a real quadratic irrational. We compare the
asymptotic behavior of Dedekind sums $S(p_k,q_k)$ belonging to convergents $p_k/q_k$ of the {\em regular}
continued fraction expansion of $z$ with that of Dedekind sums $S(s_j/t_j)$ belonging to convergents $s_j/t_j$
of the {\em negative regular} continued fraction expansion of $z$. Whereas the three main cases of this behavior
are closely related, a more detailed study of the most interesting case (in which the Dedekind sums remain bounded)
exhibits some marked differences, since the cluster points depend on the respective periods of these expansions.
We show in which cases cluster points of $S(s_j,t_j)$ can coincide with cluster points of $S(p_k,q_k)$.
An important tool for our purpose is a
criterion that says which convergents $s_j/t_j$ of $z$ are convergents $p_k/q_k$.
\end{abstract}

\noindent
{\em Keywords:}
\\ Asymptotic behavior of \DED sums,\\
Continued fraction expansions of quadratic irrationals

\noindent
{\em AMS Subject Classification:}
\\ {\em Primary:} 11 F 20,
\\ {\em Secondary:} 11 A 55

\newpage

\section*{1. Introduction and results}

Let $z$ be a real irrational number. We consider the {\em regular} continued fraction expansion
\BD
 z=a_0+\frac{1}{\displaystyle a_1+{\frac{1}{\displaystyle a_2+\LD}}}=\lfloor a_0,a_1,a_2,\LD\rfloor,
\ED
which is defined by the following algorithm:
\BD
  z_0=z,\ a_0=\lfloor z_0\rfloor,\ z_{j+1}=1/(z_j-a_j),\ a_{j+1}=\lfloor z_{j+1}\rfloor,\ j\ge 0.
\ED
The {\em convergents} $p_0/q_0$, $p_1/q_1$, $p_2/q_2$, \LD\, of this expansion are given by
\BD
   p_{-1}=1, q_{-1}=0,p_{0}=a_0,q_{0}=1, p_{k+1}=a_{k+1}p_k+p_{k-1}, q_{k+1}=a_{k+1}q_k+q_{k-1}, k\ge 0.
\ED
For the sake of simplicity we call these convergents the $\lfloor\enspace\rfloor$-{\em convergents} of $z$.

We will compare the regular continued fraction expansion of $z$ with the {\em negative regular} continued
fraction expansion
\BD
 z=c_0-\frac{1}{\displaystyle c_1-\frac{1}{\displaystyle c_2-\LD}}=\lceil c_0,c_1,c_2,\LD\rceil,
\ED
which is defined by the algorithm
\BD
z_0=z,\ c_0=\lceil z_0\rceil,\ z_{j+1}=1/(c_j-z_j),\ c_{j+1}=\lceil z_{j+1}\rceil,\ j\ge 0.
\ED
The {\em convergents} $s_0/t_0$, $s_1/t_1$, $s_2/t_2$, \LD\, of this expansion are given by
\BD
   s_{-1}=1, t_{-1}=0,s_{0}=c_0,t_{0}=1, s_{j+1}=c_{j+1}s_j-s_{j-1}, t_{j+1}=c_{j+1}t_j-t_{j-1}, j\ge 0.
\ED
They are called the $\lceil\enspace\rceil$-{\em convergents} of $z$.
Note that $c_j\ge 2$ for all $j\ge 1$, and $c_j\ge 3$ for infinitely many indices $j\ge 0$ (see \cite{Zu}).
Henceforth we call this continued fraction expansion simply the {\em negative regular expansion}.
This expansion has aroused some interest due to the work of Hirzebruch and Zagier
about class numbers of quadratic fields (see \cite{Za}, \cite[p. 136]{Za2}).

It is well-known that \FL-convergents of $z$ have optimal approximation properties which \CE-convergents have not,
unless they happen to be \FL-convergents (see \cite[p. 44ff.]{Pe}).
In general, \CE-convergents are only intercalary fractions of \FL-convergents: If
\BD
\frac{p_k}{q_k}\ \MB{ and }\ \frac{p_{k+1}}{q_{k+1}}=\frac{a_{k+1}p_k+p_{k-1}}{a_{k+1}q_k+q_{k-1}}
\ED
are two adjacent \FL-convergents of $z$, then the intercalary fractions are
\BD
  \frac{p_k+p_{k-1}}{q_k+q_{k-1}}, \frac{2p_k+p_{k-1}}{2q_k+q_{k-1}},\LD,
  \frac{(a_{k+1}-1)p_k+p_{k-1}}{(a_{k+1}-1)q_k+q_{k-1}}.
\ED
They also approximate $z$ successively, but not as well as \FL-convergents do.

It is, therefore, desirable to be able to decide whether a \CE-convergent is a \FL-convergent. We give the following criterion.

\begin{theorem} 
\label{t1}
Let $z=\lceil c_0,c_1,c_2, \LD\rceil$ be an irrational number and $s_j/t_j$, $j\ge 1$,
a $\lceil\enspace\rceil$-convergent of $z$.
Then $s_j/t_j$ is a $\lfloor\enspace\rfloor$-convergent of $z$ iff $c_{j+1}\ge 3$.
In particular, the sequence $s_j/t_j$, $j\ge 1$, contains infinitely many
$\lfloor\enspace\rfloor$-convergents of $z$.
\end{theorem}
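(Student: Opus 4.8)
The plan is to go through the classical conversion between the two expansions. Writing $z=\lfloor a_0,a_1,a_2,\LD\rfloor$, the classical identity
\[
z=\lceil a_0+1,\underbrace{2,\LD,2}_{a_1-1},\,a_2+2,\underbrace{2,\LD,2}_{a_3-1},\,a_4+2,\LD\rceil
\]
expresses the negative regular expansion in terms of the regular one. First I would record that the right-hand side satisfies $c_j\ge 2$ for $j\ge 1$ and $c_j\ge 3$ infinitely often, so by the uniqueness of the negative regular expansion it is the one belonging to $z$; in it the entries $c_j\ge 3$ are precisely the $a_{2m}+2$ with $m\ge 1$, located at the positions $P_m=a_1+a_3+\LD+a_{2m-1}$, and every other $c_j$ with $j\ge 1$ equals $2$.

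Next I would establish the precise correspondence between the \CE-convergents and the intercalary fractions of the \FL-expansion, namely
\[
\frac{s_{P_m-1+r}}{t_{P_m-1+r}}=\frac{r\,p_{2m}+p_{2m-1}}{r\,q_{2m}+q_{2m-1}},\qquad 0\le r\le a_{2m+1},\ m\ge 1,
\]
together with the companion formula $s_j/t_j=((j+1)p_0+p_{-1})/((j+1)q_0+q_{-1})$ for the initial block $0\le j\le a_1-1$. This is proved by induction on $j$ from the recursion $s_{j+1}=c_{j+1}s_j-s_{j-1}$, $t_{j+1}=c_{j+1}t_j-t_{j-1}$. A step with $c_{j+1}=2$ sends the intercalary with parameter $r$ to the one with parameter $r+1$, since $2(ru+v)-((r-1)u+v)=(r+1)u+v$; a step with $c_{j+1}=a_{2m+2}+2\ge 3$ carries the block endpoint $p_{2m+1}/q_{2m+1}$ to the first intercalary $(p_{2m+2}+p_{2m+1})/(q_{2m+2}+q_{2m+1})$ of the next block, which rests on
\[
(a_{2m+2}+2)p_{2m+1}-\big((a_{2m+1}-1)p_{2m}+p_{2m-1}\big)=p_{2m+2}+p_{2m+1}
\]
(a consequence of $p_{2m+1}=a_{2m+1}p_{2m}+p_{2m-1}$ and $p_{2m+2}=a_{2m+2}p_{2m+1}+p_{2m}$, and similarly for the $t$'s). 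I expect this bookkeeping to be the main obstacle: keeping the block boundaries aligned in the two indexings and checking that crossing an entry $\ge 3$ matches the start of the next chain of mediants.

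Finally the theorem follows. In the correspondence $s_j/t_j$ is a genuine \FL-convergent exactly at the block endpoints $r=0$ and $r=a_{2m+1}$, i.e.\ at $j=P_m-1$ for some $m\ge 1$; and $j=P_m-1$ holds iff $c_{j+1}=c_{P_m}=a_{2m}+2\ge 3$. For an interior index ($0<r<a_{2m+1}$, or $1\le j\le a_1-2$ in the initial block) the fraction $s_j/t_j$ is a strict intercalary: it is in lowest terms, and its denominator satisfies $q_{2m}<t_j<q_{2m+1}$ (respectively $q_0<t_j<q_1$ in the initial block), hence lies strictly between two consecutive convergent denominators. Since the $q_k$ with $k\ge 1$ increase strictly, no \FL-convergent has this denominator, so $s_j/t_j$ is none. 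This gives the equivalence, and the last assertion follows because $c_{j+1}\ge 3$ for infinitely many $j$ (see \cite{Zu}).
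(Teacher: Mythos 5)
Your proof is correct, but it takes a genuinely different route from the paper's. The paper argues directly with the approximation identity $|z-s_j/t_j|=1/(t_j(z_{j+1}t_j-t_{j-1}))$ and Legendre's criterion: writing $s_j/t_j=p_n/q_n$ with $n$ odd, it shows the Legendre inequality is equivalent to $z_{j+1}>2$, using $t_{j-1}=t_j-s^*$ and $q_{n-1}=s^*$ where $s^*$ is the inverse of $s_j$ modulo $t_j$; this never invokes the transition formula (\ref{1.0}). You instead take the transition formula as the starting point, identify the positions $P_m=a_1+a_3+\cdots+a_{2m-1}$ of the entries $c_j\ge 3$, and prove by induction the exact correspondence $s_{P_m-1+r}/t_{P_m-1+r}=(rp_{2m}+p_{2m-1})/(rq_{2m}+q_{2m-1})$, so that \CE-convergents are precisely the mediants and the genuine \FL-convergents occur exactly at block endpoints; the key steps (the effect of a $c_{j+1}=2$ step, the crossing of an entry $\ge 3$, and the denominator argument ruling out strict mediants) are all stated and check out, including the edge cases $a_1=1$ and $a_{2m+1}=1$. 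Your approach costs more bookkeeping and depends on the transition formula and the uniqueness of the negative regular expansion, but it buys strictly more information: it makes precise the ``intercalary fraction'' picture that the paper only describes informally in the introduction, and it shows that every \CE-convergent which is a \FL-convergent is an odd-indexed one $p_{2m\pm 1}/q_{2m\pm 1}$ --- a fact the paper later needs (and rederives from $s_j/t_j>z$) in the proof of Theorem \ref{t4}. The paper's Legendre argument is shorter and self-contained; yours is more structural.
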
 

\begin{exmp}
Let $z=e=2.71828\LD$ be Euler's number, whose regular continued fraction expansion is
\BD
 \lfloor 2,\OV{1,2k,1}\rfloor_{k=1}^{\infty}=\lfloor 2,1,2,1,1,4,1,1,6,1,\LD\rfloor,
\ED
where we have used Perron's notation (see \cite[p. 124]{Pe}). By means of the transition formula
\BE
\label{1.0}
  \lfloor a_0,a_1,a_2,a_3,a_4,\LD\rfloor=\lceil a_0+1,2^{(a_1-1)},a_2+2,2^{(a_3-1)},a_4+2,\LD\rceil
\EE
(see \cite{Bo}, p. 93) one easily obtains
\BD
 e=\lceil \OV{3,4k,3,2^{(4k-1)}}\rceil_{k=1}^{\infty}=\lceil 3,4,3,2^{(3)},3,8,3, 2^{(7)},\LD\rceil;
\ED
here $2^{(j)}$ stands for a sequence of $j$ numbers $2$. According to Theorem \ref{t1}, the \CE-convergents
\BD
  \frac{s_1}{t_1}=\frac{11}{4}, \frac{s_5}{t_5}=\frac{87}{32}, \frac{s_6}{t_6}=\frac{193}{71},
  \frac{s_7}{t_7}=\frac{1457}{536}, \frac{s_{15}}{t_{15}}=\frac{23225}{8544}, \frac{s_{16}}{t_{16}}=\frac{49171}{18089},
  \frac{s_{17}}{t_{17}}=\frac{566827}{208524}
\ED
are \FL-convergents of $e$. Indeed, if the latter are denoted by $p_k/q_k$, $k\ge 0$,
we find that the above \CE-convergents coincide with $p_3/q_3$, $p_5/q_5$, \LD, $p_{15}/q_{15}$, respectively.
As intercalary fractions we have, for instance, $s_2/t_2=(p_4+p_3)/(q_4+q_3)$ or $s_8/t_8=(p_{10}+p_9)/(q_{10}+q_9)$.
\end{exmp}

The main application of Theorem \ref{t1} in this paper is a comparative study of the asymptotic behavior of
\DED sums with arguments near quadratic irrationals.
For an integer $a$ and a natural number $b$ let
\BD
  s(a,b)=\sum_{k=1}^b((k/b))((ak/b))
\ED
be the classical \DED sum, where
\BD
 ((x))=\left\{\begin{array}{ll}
                 x-\lfloor x\rfloor-1/2 & \MB{ if } x\in\R\smallsetminus \Z; \\
                 0 & \MB{ if } x\in \Z
               \end{array}\right.
\ED
(see, for instance, \cite[p. 1]{RaGr}). We work with $S(a,b)=12s(a,b)$ instead of $s(a,b)$
and compare the asymptotic behavior of the \DED sums $S(p_k,q_k)$ of a quadratic irrational $z$ with
the asymptotic behavior of $S(s_j,t_j)$ when $k$ and $j$ tend to infinity. Theorem \ref{t1} has the effect
that the asymptotic behavior is, roughly speaking, the same in both cases.

For a quadratic irrational $z$ both expansions are periodic, i.e.,
\BD
 z=\lfloor a_0,a_1,\LD a_q,\OV{b_1,b_2,\LD,b_l}\rfloor=\lceil c_0,c_1,\LD c_r,\OV{d_1,\LD,d_m}\rceil,
\ED
where $(b_1,\LD,b_l)$ and $(d_1,\LD,d_m)$ are the respective periods (see \cite[Satz 14, Satz 15]{Zu}).
Here $l$ and $m$ are smallest possible.
In addition, $q$ and $r$ are smallest possible for this choice of $l$ and $m$.
In the purely periodic case, we put $q=-1$ and $r=-1$.
Further, let
\BD
   L=\left\{\BA{lr} l & \MB{ if } $l$ \MB{ is even;}\\
                    2l & \MB{ if } $l$ \MB{ is odd.}
             \EA\right.
\ED
Then $(b_1,\LD,b_L)$ is also a period of the regular continued fraction expansion of $z$.
If $l$ is odd, it has the form $(b_1,\LD,b_l,b_1,\LD,b_l)$.
The three cases of Theorem \ref{t2} (below) are connected with the values of
\BD
  B=(-1)^q\sum_{j=1}^L(-1)^{j-1}b_j\ \MB{ and }\ D=\frac 1m\sum_{j=1}^m d_j.
\ED
It should be noted that $B=0$ if $l$ is odd or if the period $(b_1,\LD,b_l)$ is symmetric.

\begin{theorem} 
\label{t2}

In the above setting,  $B>0$, $B=0$, $B<0$ iff  $D<3$, $D=3$, $D>3$, respectively.
\\ Suppose that $j$ and $k$ tend to $\infty$.
Then
\\ both $S(p_k,q_k)$ and $S(s_j,t_j)$ tend to $\infty$ if $B>0$ $($or $D<3)$,
\\ these quantities remain bounded if $B=0$ $($or $D=3)$,
\\ and they tend to $-\infty$ if $B<0$ $($or $D>3)$.

\end{theorem}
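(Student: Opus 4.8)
The plan is to establish the two asymptotic trichotomies independently, each by reducing the \DED sum to a partial sum over the partial quotients, and then to deduce the arithmetic equivalence between the sign of $B$ and the position of $D$ relative to $3$ from Theorem \ref{t1}. For the \FL-convergents I would start from the classical expression of the \DED sum through the regular continued fraction expansion (see \cite{RaGr}): writing $p_k/q_k=\lfloor a_0,\LD,a_k\rfloor$, one has $S(p_k,q_k)=\sum_{i=1}^{k}(-1)^{i+1}a_i+\rho_k$, where the remainder $\rho_k$ lies in a fixed bounded interval (concretely of the form $-3+(p_k+p_k^{*})/q_k$ with $0<p_k^{*}<q_k$ inverse to $p_k$ modulo $q_k$, so that $0<(p_k+p_k^{*})/q_k<2$). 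For the \CE-convergents I would use the companion identity for the negative regular expansion coming from the work of Hirzebruch and Zagier (see \cite{Za}), which yields $S(s_j,t_j)=\sum_{i=1}^{j}(3-c_i)+\rho_j'$ with $\rho_j'$ again bounded. In both cases the growth is thus carried entirely by a partial sum over the partial quotients, modulo an $O(1)$ term.

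The next step is to evaluate these partial sums periodwise. In the regular case the factor $(-1)^{i+1}$ has period $2$, so a net constant increment appears only over blocks of even length; this is exactly why one replaces the period $(b_1,\LD,b_l)$ by $(b_1,\LD,b_L)$. The periodic tail begins at index $q+1$, where the sign is $(-1)^{q}$, and one full block of length $L$ therefore increments the alternating sum by $(-1)^{q}\sum_{j=1}^{L}(-1)^{j-1}b_j=B$. Consequently, after $N$ complete periods (with $N\to\infty$ as $k\to\infty$), $S(p_k,q_k)=NB+O(1)$, which tends to $+\infty$, stays bounded, or tends to $-\infty$ according as $B>0$, $B=0$, or $B<0$.

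In the negative regular case the same periodwise argument is simpler, because the summand $3-c_i$ carries no alternating sign and the relevant period is just $(d_1,\LD,d_m)$. One full period contributes $\sum_{i=1}^{m}(3-d_i)=m(3-D)$, so after $N$ periods $S(s_j,t_j)=N\,m(3-D)+O(1)$, giving the trichotomy $+\infty$, bounded, $-\infty$ according as $D<3$, $D=3$, $D>3$. To match the two trichotomies I would invoke Theorem \ref{t1}: infinitely many \CE-convergents $s_j/t_j$ are \FL-convergents $p_k/q_k$, so the two sequences of \DED sums share a common subsequence on which their values agree. If $B>0$, then $S(p_k,q_k)\to\infty$ by the first trichotomy; hence the common subsequence tends to $\infty$, so $S(s_j,t_j)$ is unbounded above, which by the second trichotomy forces $D<3$. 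The cases $B=0$ and $B<0$ are treated identically, and since on each side the three alternatives are exhaustive and mutually exclusive, the full equivalence $B\gtrless0\Leftrightarrow D\lessgtr3$ follows, completing the theorem.

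I expect the chief difficulty to lie in fixing the correct normalization of the two underlying \DED-sum identities, above all the sign in the negative regular formula: whether the periodic contribution is $\sum(3-c_i)$ or $\sum(c_i-3)$ depends on whether one is genuinely computing $S(s_j,t_j)$ or the companion sum attached to $t_j-s_j$ modulo $t_j$, and a sign error here would interchange $+\infty$ with $-\infty$. A single test case pins the sign down unambiguously: for $z=(1+\sqrt3)/2=\lfloor\OV{1,2}\rfloor=\lceil 2,\OV{2,3}\rceil$ one has $B=1$ and $D=5/2<3$, and the \DED sums do tend to $+\infty$, which singles out $\sum(3-c_i)$. The second point requiring care is the bookkeeping behind the increment $B$: one must track the parity offset introduced by the preperiod, justify the passage from $l$ to $L$, and verify that an incomplete trailing block perturbs the partial sum only by a bounded amount. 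Once these normalizations are settled, the periodwise growth estimates and the appeal to Theorem \ref{t1} are routine.
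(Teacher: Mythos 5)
Your proposal is correct and follows essentially the same route as the paper: the alternating-sum formula for $S(p_k,q_k)$ (the paper's version, taken from \cite{Gi}, is exactly the periodwise form $A+nB+O(1)$ you describe), the Hirzebruch--Zagier--Myerson identity $S(s_j,t_j)=\sum_{k\le j}(3-c_k)+O(1)$ for the negative regular expansion, and the appeal to Theorem \ref{t1} to transfer the trichotomy from one expansion to the other and thereby deduce the equivalence between the sign of $B$ and the position of $D$ relative to $3$. The normalization issues you flag (the sign of $3-c_i$, the parity bookkeeping behind $B$, the passage from $l$ to $L$) are all resolved in the cited formulas exactly as you anticipate.
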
 

In the remainder of this section we assume $B=0$, i.e., $D=3$. In the paper \cite{Gi} it has been shown that
the \DED sums $S(p_k,q_k)$
accumulate near $L$ cluster points $U_h$, $h=1,\LD,L$. They are given by
\BE
\label{1.1}
  U_h=A+(-1)^q\sum_{k=1}^h(-1)^{k-1}b_k + \left\{\begin{array}{ll}
                                             z+1/u_h-3 & \MB{ if } q+h\MB{ is odd };  \\
                                             z-1/u_h   & \MB{ if } q+h\MB{ is even}.
                                           \end{array}\right.
\EE
Here $A=\sum_{k=0}^q(-1)^{k-1}a_k$ and $u_h=\lfloor \OV{b_h,b_{h-1},\LD,b_1,b_L,b_{L-1}, \LD,b_{h+1}}\rfloor$ is a
purely periodic quadratic irrational, $h=1,\LD,L$. On the other hand, we shall see that $S(s_j,t_j)$ accumulate
near cluster points $V_i$, $i=1,\LD,m$. They are given by
\BE
\label{1.3}
  V_i=C+\sum_{j=1}^i(3-d_j)+z-1/v_i-3,
\EE
where $C=\sum_{j=0}^r(3-c_j)$ and $v_i=\lceil\OV{d_i,d_{i-1},\LD,d_1,d_m,d_{m-1},\LD,d_{i+1}}\rceil$
is also a quadratic irrational, $i=1,\LD,m$. In the purely periodic cases, we have $A=0$ and $C=0$.

\begin{theorem} 
\label{t3}

In the above setting, the cluster points $U_h$, $h=1,\LD,L$, are pairwise distinct, as are the cluster points
$V_i,i=1,\LD,m$.
\end{theorem}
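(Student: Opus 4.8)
The plan is to rewrite both families of cluster points in the normal form ``integer $+\,z\,\pm$ (reciprocal of a reduced quadratic irrational)'' and then to reduce pairwise distinctness to elementary statements about the quadratic irrationals $u_h$ and $v_i$. Since the $a_k,b_k,c_j,d_j$ are integers, the quantities $A$, $(-1)^q\sum_{k=1}^h(-1)^{k-1}b_k$, $C$ and $\sum_{j=1}^i(3-d_j)$ all lie in $\Z$. Hence by \eqref{1.1} I would write $U_h=n_h+z+\epsilon_h/u_h$ with $n_h\in\Z$ and $\epsilon_h=+1$ if $q+h$ is odd, $\epsilon_h=-1$ if $q+h$ is even; and by \eqref{1.3} I would write $V_i=m_i+z-1/v_i$ with $m_i\in\Z$. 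Two auxiliary facts are used throughout. First, a purely periodic regular continued fraction represents a reduced quadratic irrational, so $u_h>1$ and $-1<\overline{u}_h<0$, whence $1/u_h\in(0,1)$; the analogous fact for the negative regular expansion gives $v_i>1$ and $1/v_i\in(0,1)$. Second, two purely periodic expansions coincide iff their periodic quotient-sequences coincide, so $u_h=u_{h'}$ (respectively $v_i=v_{i'}$) iff the period $(b_j)$ (respectively $(d_j)$) is invariant under the cyclic shift by $h'-h$ (respectively $i'-i$).

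I would treat the $V_i$ first, since they are the easier case: every $V_i$ carries the same sign $-1/v_i$. If $V_i=V_{i'}$ then $1/v_{i'}-1/v_i=m_i-m_{i'}\in\Z$; but $1/v_i,1/v_{i'}\in(0,1)$ force this integer to be $0$, so $v_i=v_{i'}$. By the shift criterion this needs $(d_j)$ to be invariant under the shift by $i'-i$, which for $1\le i\ne i'\le m$ contradicts the minimality of the period length $m$. Hence the $V_i$ are pairwise distinct.

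For the $U_h$ the sign $\epsilon_h$ alternates, and this is where the real work lies. Writing $U_h-U_{h'}=(n_h-n_{h'})+(\epsilon_h/u_h-\epsilon_{h'}/u_{h'})$, a coincidence $U_h=U_{h'}$ forces the reciprocal part $\epsilon_h/u_h-\epsilon_{h'}/u_{h'}\in(-2,2)$ to be an integer. If $\epsilon_h=\epsilon_{h'}$ this part equals $\epsilon_h(1/u_h-1/u_{h'})\in(-1,1)$, so it must vanish and $u_h=u_{h'}$. If instead $\epsilon_h=-\epsilon_{h'}$ the part equals $\pm(1/u_h+1/u_{h'})$ with $1/u_h+1/u_{h'}\in(0,2)$, so I would need $1/u_h+1/u_{h'}=1$. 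This last possibility is the main obstacle, and I expect to dispose of it by passing to Galois conjugates: since $\overline{u}_h,\overline{u}_{h'}\in(-1,0)$ one has $1/\overline{u}_h<-1$ and $1/\overline{u}_{h'}<-1$, so $1/\overline{u}_h+1/\overline{u}_{h'}<-2$, contradicting the conjugate equation $1/\overline{u}_h+1/\overline{u}_{h'}=1$. Thus only the alternative $u_h=u_{h'}$ with $\epsilon_h=\epsilon_{h'}$ can survive.

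It then remains to rule out $u_h=u_{h'}$ together with $\epsilon_h=\epsilon_{h'}$ for $h\ne h'$. By the shift criterion $u_h=u_{h'}$ forces $l\mid(h'-h)$. When $l$ is even we have $L=l$ and $1\le h\ne h'\le l$, so $l\nmid(h'-h)$ and in fact $u_h\ne u_{h'}$ already, a contradiction. When $l$ is odd we have $L=2l$ and the only possibility is $h'=h+l$; but then $q+h'$ and $q+h$ have opposite parities, so $\epsilon_{h'}=-\epsilon_h$, contradicting $\epsilon_h=\epsilon_{h'}$. Hence no coincidence $U_h=U_{h'}$ is possible, and the $U_h$ are pairwise distinct as well. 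The only genuinely delicate point is the opposite-sign case, where the boundedness of the conjugates of reduced quadratic irrationals is exactly what forbids the resonance $1/u_h+1/u_{h'}=1$.
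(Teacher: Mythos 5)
Your proof is correct, and its overall architecture matches the paper's: normalize each cluster point to an integer plus $z$ plus a signed reciprocal, observe that a coincidence forces the signed reciprocals to differ by an integer, and split into the equal-sign and opposite-sign cases. Your treatment of the $V_i$, and of the equal-sign case for the $U_h$ (reduce to $u_h=u_{h'}$, then invoke minimality of the period length $l$ together with the parity obstruction $h'=h\pm l$ when $l$ is odd), is essentially the paper's argument, just spelled out more explicitly via the cyclic-shift criterion. Where you genuinely diverge is the opposite-sign case $1/u_h+1/u_{h'}=1$: the paper disposes of it by computing the regular continued fraction of $u_h=1+1/(u_{h'}-1)$ explicitly, separating the subcases $b_{h'}>1$ and $b_{h'}=1$, and deriving the contradiction $b-1=b$ (or $b+1=b$) from pure periodicity of period length $l$; you instead apply the nontrivial automorphism of $\Q(u_h)=\Q(u_{h'})$ (an identification that the relation $u_{h'}=u_h/(u_h-1)$ itself supplies) and use Galois's classical fact that a purely periodic regular continued fraction represents a reduced quadratic irrational, so that $1/\overline{u}_h+1/\overline{u}_{h'}<-2$ cannot equal $1$. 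Your route is shorter and avoids the case distinction on $b_{h'}$, at the cost of importing the reducedness theorem; the paper's route stays entirely inside elementary continued-fraction manipulations, consistent with the toolkit it uses elsewhere (e.g., the transition formula in the proof of Theorem 4). Both arguments are complete.
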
 

The following theorem concerns cluster points $V_i$ which coincide
with cluster points $U_h$.

\begin{theorem} 
\label{t4}
Let $(d_1,\LD,d_m)$ be the period of the negative regular expansion of $z$. Let $i\in\{1,\LD,m\}$ be such that
$d_{i+1}\ge 3$ $($if $i=m$ this means $d_1\ge 3$$)$. Then the cluster point $V_i$ coincides with some cluster point
$U_h$ such that $q+h$ is odd. A cluster point $U_h$ with $q+h$ even cannot coincide with a cluster point $V_i$.
\end{theorem}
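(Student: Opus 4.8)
The plan is to prove the second assertion first, since it immediately forces the parity statement in the first assertion. Throughout, $\bar w$ denotes the algebraic conjugate of a quadratic irrational $w$. I will use two reduction facts. By Galois' theorem the purely periodic regular quantity $u_h=\lfloor\overline{b_h,\ldots,b_{h+1}}\rfloor$ is reduced, so $u_h>1$ and $\bar u_h\in(-1,0)$. The analogous reduction theory for the negative regular expansion shows that the purely periodic quantity $v_i=\lceil\overline{d_i,\ldots,d_{i+1}}\rceil$ (whose partial quotients are all $\ge 2$) satisfies $v_i>1$ and $\bar v_i\in(0,1)$; in particular $1/u_h$ and $1/v_i$ both lie in $(0,1)$. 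Establishing this second fact cleanly, or citing it, is the one genuinely technical ingredient, since it is the negative regular counterpart of Galois' theorem rather than the classical statement itself.

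For the non-coincidence assertion, suppose a cluster point $U_h$ with $q+h$ even coincided with some $V_i$. By (\ref{1.1}) and (\ref{1.3}) we may write $U_h=N_1+z-1/u_h$ and $V_i=N_2+z-1/v_i$ with $N_1,N_2\in\Z$ (the constant $-3$ in (\ref{1.3}) being absorbed into $N_2$). Equating and cancelling $z$ gives $1/v_i-1/u_h=N_1-N_2\in\Z$. Since $1/v_i-1/u_h\in(-1,1)$, this integer is $0$, whence $u_h=v_i$. But then the single real number $u_h=v_i$ would have $\bar u_h\in(-1,0)$ and simultaneously $\bar v_i\in(0,1)$, which is impossible. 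Hence no $U_h$ with $q+h$ even equals any $V_i$.

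For the first assertion, fix $i$ with $d_{i+1}\ge 3$. In the periodic regime $c_{j+1}$ runs through the period $(d_1,\ldots,d_m)$, so for every sufficiently large $j$ in the arithmetic progression with $c_{j+1}=d_{i+1}$ we have $c_{j+1}\ge 3$; by Theorem \ref{t1} each such $s_j/t_j$ is a \FL-convergent, say $s_j/t_j=p_k/q_k$. As these are reduced fractions, $S(s_j,t_j)=S(p_k,q_k)$. Letting $j\to\infty$ along this progression, the left-hand side tends to $V_i$, while the right-hand side is a subsequence of $(S(p_k,q_k))_k$ and hence can accumulate only at the points $U_1,\ldots,U_L$. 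Thus $V_i=U_h$ for some $h$, and by the assertion already proved this $h$ cannot satisfy $q+h$ even, so $q+h$ is odd, as claimed.

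The main obstacle, as indicated, is the reduction lemma placing $\bar v_i$ in $(0,1)$. Everything else is bookkeeping: the invariance $S(s_j,t_j)=S(p_k,q_k)$ is immediate once Theorem \ref{t1} identifies the fraction, and the pairwise distinctness of the $U_h$ from Theorem \ref{t3} guarantees that the matched indices $k$ eventually lie in a single residue class modulo $L$, so that $h$ is well defined. One could instead pin down the parity directly, using the classical fact that every \CE-convergent approximates $z$ from above and can therefore only equal an odd-indexed $p_k/q_k$; combined with $L$ being even and with $U_h$ being the limit of $S(p_k,q_k)$ over $k\equiv q+h\pmod L$, this again yields $q+h$ odd. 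I prefer the conjugate argument above because it simultaneously delivers the non-coincidence in the second assertion.
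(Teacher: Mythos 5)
Your proof is correct, but your route to the second assertion is genuinely different from the paper's. Both arguments reduce the hypothetical coincidence $U_h=V_i$ with $q+h$ even to $u_h=v_i$ by the same integer-difference step (both $1/u_h$ and $1/v_i$ lie in $(0,1)$, so the integer is $0$). From there the paper applies the transition formula (\ref{1.0}) to write $u_h=\lceil b_h+1,\overline{2^{(b_{h-1}-1)},\ldots,b_h+2}\rceil$ and observes that this expansion is not purely periodic (first entry $b_h+1$ versus period entry $b_h+2$), contradicting the pure periodicity of $v_i$; you instead invoke the two reduction theories, placing $\bar u_h\in(-1,0)$ by Galois and $\bar v_i\in(0,1)$ by the minus-expansion analogue (available in \cite{Za2} or \cite{Zu}), which is incompatible with $u_h=v_i$. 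Your conjugate argument is arguably cleaner and avoids the transition-formula bookkeeping, at the price of importing the reduction lemma for the negative regular expansion --- which you correctly flag as the one ingredient needing a citation or a proof. For the first assertion your argument is essentially the paper's: Theorem \ref{t1} identifies the \CE-class-$i$ convergents (where $c_{j+1}=d_{i+1}\ge 3$) with \FL-convergents, and the convergence of $S(s_j,t_j)$ to $V_i$ forces $V_i$ to be one of the cluster points $U_1,\ldots,U_L$. The only difference is that the paper pins down the parity of $q+h$ directly (each $s_j/t_j>z$, so the matched index $k$ is odd and $q+h\equiv k\bmod 2$ within a \FL-class), whereas you deduce it from the non-coincidence assertion; both are valid, and you note the direct route as an alternative yourself.
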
 

\begin{rem}
One can show that the period obtained from $(b_1,\LD,b_L)$ by means of the transition formula (\ref{1.0})
is shortest possible among the possible periods of the negative regular expansion of $z$
--- but we abstain from doing this here. Accordingly, there are
$L/2$ indices $i\in\{1,\LD,m\}$ such that $d_{i+1}\ge 3$, the corresponding values of $d_{i+1}$
being either $b_1+2,b_3+2, \LD, b_{L-1}+2$ or $b_2+2, b_4+2,\LD,b_L+2$.
Hence there are $L/2$ coincidences of cluster points $V_i$ with cluster points $U_h$.
This means that {\em each} cluster point $U_h$, $q+h$ odd, coincides with such a $V_i$. So we have a fairly
complete picture: The cluster points $U_h$, $q+h$ odd, coincide with the cluster points $V_i$, $d_{i+1}\ge 3$.
Moreover, there are no coincidences of cluster points $U_h$, $q+h$ even, and cluster points $V_i$.
\end{rem}

\begin{exmp}
Let $z=1/\sqrt{53}=\lfloor 0,7,\OV{3,1,1,3,14}\rfloor=
\lceil 1, 2^{(6)},\OV{5,3,2,2,16,2,2,3,5,2^{(13)}}\rceil$. Since $l=5$ is odd,
the sequences $S(p_k,q_k)$ and $S(s_j,t_j)$ remain bounded. Because we have five ($=L/2$) entries $\ge 3$
in the period of the negative regular expansion of $z$, we have five common cluster points, namely
$V_1=U_2=(636+60\sqrt{53})/371=2.89166\LD$, $V_4=U_4=(-159+54\sqrt{53})/53=4.41747\LD$, $V_7=U_6
=(-2862+60\sqrt{53})/371=-6.53690\LD$, $V_8=U_8=(-1749+57\sqrt{53})/212=-6.29261\LD$,
and $V_{22}=U_{10}=(477+57\sqrt{53})/212=4.20738\LD$ As we expect, there are no further coincidences between
the remaining 17 cluster point $V_i$ and the remaining 5 cluster points $U_h$, although these points may lie close
together, like $V_{12}=(-13833+97\sqrt{53})/2332=-5.62900\LD$ and $U_7=(-1749-46\sqrt{53})/371=-5.61694\LD$
\end{exmp}

\section*{2. Proofs}

\begin{proof}[Proof of Theorem \ref{t1}]
Let $z_{j+1}$ denote the $(j+1)$th complete quotient of the negative regular
expansion of $z$.
Accordingly,
\BD
  z=\frac{z_{j+1}s_j-s_{j-1}}{z_{j+1}t_j-t_{j-1}}.
\ED
This gives
\BD
  z-\frac{s_j}{t_j}=\frac{s_jt_{j-1}-s_{j-1}t_j}{t_j(z_{j+1}t_j-t_{j-1})}.
\ED
Here we use $s_jt_{j-1}-s_{j-1}t_j=-1$ (see \cite[formula (4)]{Zu}) and obtain
\BE
\label{2.0.1}
  \left|z-\frac{s_j}{t_j}\right|=\frac{1}{t_j(z_{j+1}t_j-t_{j-1})}=\frac{1}{t_j^2}\cdot\frac{t_j}{z_{j+1}t_j-t_{j-1}}.
\EE
We need the regular continued fraction expansion $s_j/t_j=\lfloor a_0,\LD,a_n\rfloor$. However, we do not
require $a_n\ge 2$; hence
we may assume that $n$ is odd. Let $p_0/q_0$,
\LD, $p_n/q_n$ be the convergents of this expansion, in particular, $s_j/t_j=p_n/q_n$.
Now we can apply a criterion of Legendre
(see \cite[p. 39]{Pe}) to the identity (\ref{2.0.1}); thereby, we see that $s_j/t_j$ is a
$\lfloor\enspace\rfloor$-convergent of $z$, iff
\BE
\label{2.0}
    \frac{t_j}{z_{j+1}t_j-t_{j-1}}<\frac{q_n}{q_n+q_{n-1}}.
\EE
Let $s^*\in\{1,\LD,t_j-1\}$ denote the inverse of $s_j$ mod $t_j$, i.e., $s_js^*\equiv 1$ mod $t_j$ (observe $j\ge 1$).
Since $s_jt_{j-1}-s_{j-1}t_j=-1$, we see that $t_{j-1}\equiv -s^*$ mod $t_j$, and $1\le t_{j-1}<t_j$ implies
$t_{j-1}=t_j-s^*$. On the other hand, $s_j=p_n$ and $t_j=q_n$, which yields
\BD
  s_jq_{n-1}-p_{n-1}t_j=(-1)^{n-1}=1
\ED
(see \cite[p. 25]{Pe}).
Accordingly, $q_{n-1}=s^*$. Therefore, the condition $(\ref{2.0})$ is equivalent to
\BD
  z_{j+1}t_j-(t_j-s^*)>t_j+s^*,\ \MB{ i.e., to }\ z_{j+1}>2.
\ED
Recall that $z_{j+1}>2$ iff $c_{j+1}\ge 3$. Recall, further, that $c_{j+1}\ge 3$ holds for infinitely many
indices $j$. Thereby, we obtain the desired result.
\end{proof}

\begin{proof}[Proof of Theorem \ref{t2}]
Let $z=\lfloor a_0,a_1,\LD a_q,\OV{b_1,b_2,\LD,b_l}\rfloor=\lceil c_0,c_1,\LD c_r,\OV{d_1,\LD,d_m}\rceil$ be as
above.
In \cite{Gi} we studied the asymptotic behavior of $S(p_k,q_k)$ for the \FL-convergents $p_k/q_k$
of $z$.  Indeed, if $k=q+nL+h$, $n\ge 0$, $h\in\{1,\LD, L\}$,
\BE
\label{2.1}
  S(p_k,q_k)=A+nB+(-1)^q\sum_{j=1}^{h}(-1)^{j-1}b_j+
                                             \left\{\begin{array}{ll}
                                             (p_k+q_{k-1})/q_k-3 & \MB{ if } k\MB{ is odd };  \\
                                             (p_k-q_{k-1})/q_k   & \MB{ if } k\MB{ is even}.
                                           \end{array}\right.
\EE
Here $A$ and $B$ are as in Section 1. Since $p_k/q_k\to z$ for $k\to \infty$ and $0\le q_{k-1}/q_{k}\le 1$ for $k\ge 0$,
this gives
\begin{eqnarray}
\label{2.3}
  S(p_k,q_k)\to \infty \MB{ if } B>0,\nonumber\\
  S(p_k,q_k)\MB{ remains bounded} \MB{ if } B=0, \MB{ and }\\
  S(p_k,q_k) \to -\infty \MB{ if } B<0.\nonumber
\end{eqnarray}
A similar asymptotic behavior takes place for the \DED sums $S(s_j,t_j)$ belonging to the
\CE-convergents $s_j/t_j$ of $z$.
Indeed, a formula of Hirzebruch, Zagier and Myerson  (see \cite{My}) says
\BD
  S(s_j,t_j)=\sum_{k=0}^j(3-c_k)+(s_j-t_{j-1})/t_j-3,
\ED
where we have written $z=\lceil c_0,c_1,c_2,\LD\rfloor$ disregarding the period.
Hence we have, for $j=r+nm+i$, $n\ge 0$, $i\in\{1,\LD, m\}$,
\BE
\label{2.5}
  S(s_j,t_j)=C+nm(3-D)+\sum_{k=0}^{i}(3-c_k)+(s_j-t_{j-1})/t_j-3,
\EE
where $C$ and $D$ are as in Section 1.
Observe $s_j/t_j\to z$ for $j\to\infty$ and $0\le t_{j-1}/t_j\le 1$ for $j\ge 0$ (see \cite[Satz 4, Satz 1]{Zu}).
Then we obtain
\begin{eqnarray}
\label{2.7}
  S(s_j,t_j)\to \infty \MB{ if } D< 3,\nonumber\\
  S(s_j,t_j)\MB{ remains bounded} \MB{ if } D=3, \MB{ and }\\
  S(s_j,t_j) \to -\infty \MB{ if } D>3.\nonumber
\end{eqnarray}
By Theorem \ref{t1}, the sequence $s_j/t_j$ contains infinitely many
\FL-convergents $p_k/q_k$ of $z$. Therefore, the asymptotic behavior of $S(p_k,q_k)$
for these convergents of $z$ in the sense of (\ref{2.3}) must be the same as in the sense of (\ref{2.7}).
This, however, yields Theorem \ref{t2}, in particular, the connection between $B$ and $D$.
\end{proof}

\begin{rem} The relation between the quantities $B$ and $D$ can also be proved in a simple and direct way
by means of the transition formula (\ref{1.0}).
\end{rem}

\begin{proof}[Proof of Theorem \ref{t3}] In the paper \cite{Gi} we deduced formula (\ref{1.1}) from (\ref{2.1}). In
exactly the same
way one can deduce (\ref{1.3}) from (\ref{2.5}). First we show that the cluster points $V_i$, $i=1,\LD,m$, are pairwise
distinct.
Suppose that $V_i=V_j$ for some $i,j\in\{1,\LD,m\}$.  This implies
\BD
  1/v_i=1/v_j+a,\ \MB{ for some }a\in\Z.
\ED
Since $v_i, v_j$ are both $>1$,
we have $0<1/v_i,1/v_j<1$, whence $a=0$ and $v_i=v_j$ follows. Since $m$ is smallest possible,
$v_i$ and $v_j$ have different negative regular expansions if $i\ne j$. Therefore,  $i=j$.

In the case of the cluster points $U_h$, $h=1,\LD,L$, the proof is slightly more subtle.
Suppose, first, that $l$ is odd.
Then the values $1/u_h$, $h=1,3,\LD,l$, and $1/u_{l+h}=1/u_h$, $h=2,4,\LD,l-1$, appear with the same sign in
(\ref{1.1}), whereas $1/u_h$, $h=2,4,\LD ,l-1$, and $1/u_{l+h}=1/u_h$, $h=1,3,\LD,l$, appear with the opposite sign.
If $1/u_h$ and $1/u_k$ have the same sign, one can argue as in the case of the cluster points $V_i$, since $u_h,u_k$
are both $>1$. So we are left with the case
\BD
 1/u_h=-1/u_k+a, a\in\Z.
\ED
Since $0<1/u_h,1/u_k<1$, this is only possible with $a=1$. Accordingly, we obtain
\BD
  u_h=(1-1/u_k)^{-1}=1+1/(u_k-1).
\ED
Now $u_k=\lfloor\OV{ b_k,b_{k-1},\LD,b_1,b_l,b_{l-1},\LD,b_{k+1}}\rfloor.$
If $b_k>1$, we obtain
\BD
  1+1/(u_k-1)=\lfloor 1,b_k-1,\OV{b_{k-1},\LD,b_1,b_l,\LD ,b_k}\rfloor=u_h=
  \lfloor \OV{b_h,\LD, b_1,b_l,\LD,b_{h+1}}\rfloor.
\ED
However, $u_h$ is purely periodic with period length $l$, and so must be $1+1/(u_k-1)$. But if we compare the second
entry $b_k-1$ with the second entry that follows the period, we get the contradiction
$b_k-1=b_k$.
If $b_k=1$, we obtain
\BD
  1+1/(u_k-1)=\lfloor b_{k-1}+1,\OV{b_{k-2},\LD,b_1,b_l,\LD ,b_{k-1}}\rfloor.
\ED
Again, $1+1/(u_k-1)$ must be purely periodic with period length $l$, which gives the impossible
relation $b_{k-1}+1=b_{k-1}$.

If $l=L$ is even, we proceed in a similar way: We have the same sign in (\ref{1.1}) for $1/u_h$, $h=1,3,\LD,L-1$,
and the opposite sign for $1/u_h$, $h=2,4,\LD,L$ and, thus, can rule out the corresponding identities
if the signs are equal.
In the case $1/u_h=-1/u_k+a$, $a\in\Z$, we argue as above.
\end{proof}

\begin{proof}[Proof of Theorem \ref{t4}]
Let $h\in\{1,\LD, L\}$. We say that the \FL-convergent $p_k/q_k$ belongs to the \FL-{\em class} $h$, if
$k=q+nL+h$ for some $n\ge 0$. For the \FL-convergents $p_k/q_k$ belonging to this class, the corresponding
\DED sums $S(p_k,q_k)$ converge against the cluster point $U_h$ --- as we have shown in the paper \cite{Gi}.
In the same way we say that the \CE-convergent $s_j/t_j$
belongs to the \CE-{\em class} $i$, $i\in\{1,\LD, m\}$, if $j=r+nm+i$ for some $n\ge 0$. For \CE-convergents $s_j/t_j$
belonging to this class, the
corresponding \DED sums $S(s_j,t_j)$ converge against the cluster point $V_i$.

Let $i\in\{1,\LD,m\}$ be such that $d_{i+1}\ge 3$ (if $i=m$, $d_1\ge 3$). By Theorem \ref{t1}, each convergent
$s_j/t_j$ of the \CE-class $i$ equals some \FL-convergent $p_k/q_k$. Since $s_j/t_j>z$, $k$ must be odd.
Up to finitely many exceptions, these convergents $s_j/t_j$ must belong to exactly one \FL-class $h$ ---
otherwise the convergent sequence $S(s_j,t_j)$ would have more than one cluster point. Accordingly,
$S(s_j,t_j)$ converges against $U_h=V_i$ for this $h$. Because $q+h\equiv k$ mod 2 for all $p_k/q_k$ in the
\FL-class $h$, we see that $q+h$ is odd.

Finally, suppose that $q+h$ is even and $U_h=V_i$ for some $i\in\{1,\LD,m\}$. By (\ref{1.1}) and (\ref{1.3}),
$-1/u_h=-1/v_i+a$, $a\in\Z$.
As in the proof of Theorem \ref{t3}, we conclude $u_h=v_i$.
Now
\BD
  u_h=\lfloor\OV{b_h,b_{h-1},\LD,b_1,b_L,\LD b_{h+1}}\rfloor=
  \lceil{b_h+1,\OV{2^{(b_{h-1}-1)},b_{h-2}+2,\LD,2^{(b_{h+1}-1)}, b_h+2}}\rceil.
\ED
Since
$v_i=\lceil\OV{d_i,d_{i-1},\LD,d_1,d_m,\LD d_{i+1}}\rceil$ is purely periodic, we see that
$b_h+1$ must be equal to $b_h+2$, which is impossible.
\end{proof}


\end{document}